\title{Patterns in numbers and infinite sums and products}
\author{YINING HU \\
CNRS, Institut de Math\'ematiques de Jussieu-PRG \\
Universit\'e Pierre et Marie Curie, Case 247 \\
4 Place Jussieu \\
F-75252 Paris Cedex 05 (France) \\
{\tt yining.hu@imj-prg.fr}}
\date{}
\begin{document}

\maketitle

\begin{abstract}
Let $a_{w,B}(n)$ denote the the number of occurences of the word $w$ in the base $B$ expansion of the non-negative integer $n$. In this article we
generalize the results of Allouche and \mbox{Shallit \cite{infprod}} by
proving the existence of a finite set $L_{w,B}$ of pairs $(l,c_l)$ where $l$ 
is a polynomial with integer coefficients of degree 1  and $c_l$ an integer such that:
$$\sum\limits_{n\geq 0} (-1)^{a_{w,B}(n)}\sum\limits_{(l,c_l)\in L_{w,B}} c_l f(l(n))=
\begin{cases}0 &\mbox{ if } w\neq 0^j,\\
 -2\cdot(-1)^{a_{w,B}(0)}f(0) &\mbox{ if } w=0^j
\end{cases}$$
where $f$ is any function that verifies certain convergence conditions.

After exponentiating, we recover previous results and obtain new ones such as
$$\prod \limits_{n\geq 1} \left( \frac{3n+1}{3n+2}\right) ^{(-1)^{n}}=\frac{2}{\sqrt{3}},$$
and
$$\prod \limits_{n\geq 1} \left( \frac{9n+7}{9n+8} \right)^{(-1)^{a_{21,3}(n)}}=\frac{8}{7\sqrt{3}}. $$
\end{abstract}
\newtheorem{coro}{Corollary}
\section{Introduction}
Let $s_q(n)$ denote the sum of digits of the non-negative integer $n$ when written in base $q$. Woods and Robbins \cite{robbins,woods}
proved that 
\begin{equation}\label{eq:rw}
 \prod\limits_{n\geq 0} \bigg (\frac{2n+1}{2n+2} \bigg )^{(-1)^{s_2(n)}}=\frac{\sqrt{2}}{2}. 
\end{equation}
\newtheorem{thm}{Theorem}
\newtheorem{lemma}{Lemma}
Allouche and Shallit \cite{infprod} looked at the function $a_w(n)$, defined as the number of occurrences of the finite non-empty
binary word $w$ in the binary expansion of $n$. With this notation the $s_2(n)$ in Equation \ref{eq:rw} becomes $(-1)^{a_1(n)}$. 
%The authors generalized the result to $a_w(n)$ for all $w$ by proving that there exists an effectively computable rational function
%$b_w(n)$ such that
%
%\begin{equation}\label{eq:b}
% \sum\limits_{n\geq 0} \log_2(b_w(n))X^{a_w(n)}=-\frac{1}{1-X}.
%\end{equation}
With the following two theorems, they generalized the result to $a_w(n)$ for all $w$.
\pagebreak

\begin{thm}[Allouche and Shallit \cite{infprod}]\label{3}
Let $w$ be a string of zeros and ones, and
$$g=2^{|w|-1}, \;\;\; h=\lfloor v(w)/2\rfloor, $$
and let $X$ be a complex number with $|X|\leq1$ and $X\neq 1$. Then
$$\sum\limits_{n}X^{a_w(gn+h)}L(2gn+v(w))=-\frac{1}{1-X},$$
where the sum is over $n\geq 1$ for $w=0^j$ and $n\geq 0$ otherwise.
\end{thm}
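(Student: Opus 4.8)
The plan is to play two self-similarities off against each other: the behaviour of $a_w$ under appending a binary digit, and a halving relation for the weight $L$, namely $L(2m)+L(2m+1)=L(m)$ together with $L(1)=-1$ and the decay $L(m)=O(1/m)$ --- this is all I use about $L$. Write $k=|w|$. The combinatorial input is the elementary identity, valid for every integer $j\ge1$ and $b\in\{0,1\}$,
$$a_w(2j+b)=a_w(j)+[b=w_k]\cdot[\,\text{the binary expansion of }j\text{ ends in }w_1\cdots w_{k-1}\,],$$
which holds because appending a digit to a binary word can only create a new occurrence of $w$ at the very end, and does so exactly when the appended digit is $w_k$ and the word already ended in $w_1\cdots w_{k-1}$. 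The set of $j\ge1$ realizing the second bracket is, up to the single element $j=h$, the set $\{gn+h:n\ge1\}$, and since $2(gn+h)+w_k=2gn+v(w)$, this is exactly how the argument $2gn+v(w)$ of $L$ will appear.

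First I would introduce the truncations $\Lambda_N:=\sum_{m=1}^{2^N-1}X^{a_w(m)}L(m)$ and rewrite $\Lambda_{N+1}$ by separating $m$ into even and odd values. The even part runs over $m=2j$ with $j\ge1$ (the value $j=0$ must be excluded, $L(0)$ being undefined) and the odd part over $m=2j+1$ with $j\ge0$. Substituting the two instances $a_w(2j)$, $a_w(2j+1)$ of the identity above, peeling off the single index $j=0$ --- the only place that identity can fail, because leading zeros are suppressed --- and then regrouping by means of $L(2j)+L(2j+1)=L(j)$: the terms whose second bracket vanishes reassemble into $\Lambda_N$, and the terms whose bracket equals $1$ contribute, since $X^{[\cdot]}-1\in\{0,X-1\}$, a multiple $(X-1)$ of a partial sum of $\sum_n X^{a_w(gn+h)}L(2gn+v(w))$. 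What survives is an identity
$$\Lambda_{N+1}-\Lambda_N=X^{a_w(1)}L(1)+(X-1)\sum_n X^{a_w(gn+h)}L(2gn+v(w)),$$
where the last sum runs over a finite range of $n$ whose starting point ($0$ or $1$) is dictated by whether $w_1=0$, and where $a_w(1)=1$ for $w=1$ and $a_w(1)=0$ otherwise.

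The one genuinely analytic point is that $\Lambda_{N+1}-\Lambda_N=\sum_{m=2^N}^{2^{N+1}-1}X^{a_w(m)}L(m)\to0$ as $N\to\infty$. I would obtain this by summation by parts over the dyadic block $[2^N,2^{N+1})$, on which $L$ is monotone with total variation $O(2^{-N})$, using that the partial sums of $X^{a_w(m)}$ over the block are $o(2^N)$; it is here that the structure of the digit-pattern-counting sequence $X^{a_w}$ enters (at worst square-root-type cancellation, itself deducible from the same dyadic self-similarity). Letting $N\to\infty$ in the displayed identity then collapses the entire series to a single linear equation $(1-X)S=-X^{a_w(1)}L(1)+(\text{a few boundary corrections})$, in which every boundary correction reduces to the value $L(1)=-1$; one checks that in each case the right-hand side equals $-1$, so $S=-1/(1-X)$. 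The case $w=0^j$ is the same argument carried through with extra care: there the suppression of leading zeros spoils the bit-append identity for more of the small indices, and deleting the term $n=0$ --- equivalently, summing over $n\ge1$, as in the statement --- is exactly what restores $(1-X)S=-1$. The step I expect to be the main obstacle is precisely this uniform control of the block character sums, together with the bookkeeping of the small-index boundary terms for $w=0^j$ and, more generally, for $w$ beginning with $0$, where the identification of $\{j:\text{bin}(j)\text{ ends in }w_1\cdots w_{k-1}\}$ with $\{gn+h\}$ degenerates; everything else is a bounded amount of algebra.
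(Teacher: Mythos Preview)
The paper does not contain a proof of this theorem: it is quoted verbatim from Allouche--Shallit \cite{infprod} as a background result, with no argument supplied. The only place the paper touches the statement is the section ``Link with previous results'', where it is shown that the special case $X=-1$ is what Lemma~\ref{lm:main} produces when one takes $B=2$ and $f=L$; the telescoping identity $L(n)-L(2n)-L(2n+1)=0$ (your halving relation, rearranged) collapses the left side of Lemma~\ref{lm:main} to the constant $1$, and dividing by $-2$ gives the value $-1/2=-1/(1-(-1))$. So for $X=-1$ your approach and the paper's are literally the same manipulation, just organized differently: you iterate a dyadic recursion for the partial sums $\Lambda_N$, while the paper writes Lemma~\ref{lm:main} once and reads off the answer.

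Your sketch is essentially the original Allouche--Shallit proof for general $X$, and the outline is sound. Two small caveats. First, the phrase ``at worst square-root-type cancellation'' overstates what is true and understates what is needed: for $|X|=1$ the block sums $\sum_{m<2^N}X^{a_w(m)}$ can be as large as $c\,2^{\alpha N}$ with $\alpha$ arbitrarily close to $1$ as $X\to1$ along the circle, so the correct statement is only $o(2^N)$ uniformly in $N$ for each fixed $X\neq1$; that is still enough for your Abel-summation step, but you should not claim more. Second, your displayed recursion gives $(1-X)S=X^{a_w(1)}L(1)$ with the sum $S$ starting at the value of $n$ forced by $j\ge1$, which is $n\ge1$ when $k=1$ and $n\ge0$ when $k\ge2$ and $w_1=1$; matching this to the theorem's range requires, in the case $w=1$, adding back the $n=0$ term $X^{a_w(0)}L(1)=-1$ before comparing with $-1/(1-X)$. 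You already flag this kind of adjustment, but it is worth noting that the ``few boundary corrections'' are not all absorbed into a single $L(1)$: the starting index of the sum shifts as well.
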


\begin{thm}[Allouche and Shallit \cite{infprod}]\label{10}
 There is an effectively computable rational function $b_w(n)$ such that, for all $X\neq 1$ with $|X|\leq 1$, we have
 \begin{equation}\label{eq:b}
 \sum\limits_{n}\log_2(b_w(n))X^{a_w(n)}=-\frac{1}{1-X}, 
 \end{equation}
  where the sum is over $n\geq 1$ for $w=0^j$ and $n\geq 0$ otherwise.
\end{thm}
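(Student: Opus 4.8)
The natural starting point is Theorem~\ref{3}. Put $m=gn+h$; writing $v(w)=2h+\rho$ with $\rho\in\{0,1\}$ the last letter of $w$, one has $2gn+v(w)=2m+\rho$ while $a_w(gn+h)=a_w(m)$, so, recalling that $L(k)=\log_2\frac{k}{k+1}$, Theorem~\ref{3} is exactly the identity
\[
\sum_{m} X^{a_w(m)}\,\log_2\frac{2m+\rho}{2m+\rho+1}=-\frac{1}{1-X},
\]
where $m$ runs over the integers $\ge h$ with $m\equiv h\pmod g$ (over the positive multiples of $g$ when $w=0^j$). Regrouping by the value of $a_w(m)$ — legitimate for $|X|<1$ — this says that $\prod_{m\equiv h\ (g),\,a_w(m)=k}\frac{2m+\rho}{2m+\rho+1}=\tfrac12$ for every $k$; so what must be produced is a \emph{single} rational function $b_w$ with $\prod_{a_w(m)=k}b_w(m)=\tfrac12$ for all $k\ge0$. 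From such a $b_w$, \eqref{eq:b} follows for $|X|<1$ by the same regrouping, and for $|X|=1$, $X\ne1$, by a separate convergence argument (below).

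The plan is to promote the partial products over the class $m\equiv h\ (g)$ — equivalently, over the $m$ whose base-$2$ expansion ends in the length-$(|w|-1)$ prefix $w'$ of $w$ — to products over the full level sets of $a_w$. I would organize this around the finite Knuth--Morris--Pratt automaton $\mathcal A$ scanning for $w$: a state records the longest suffix of the prefix read so far that is itself a prefix of $w$, and $a_w(m)$ counts the $w$-completing (accepting) transitions on the run of $\mathcal A$ over the expansion of $m$. Unfolding $\sum_m X^{a_w(m)}\log_2 b_w(m)$ along low-order bits of $m$ is the same as unfolding these runs, and the telescoping one needs amounts to choosing compatible rational weights attached to the states of $\mathcal A$; Theorem~\ref{3} supplies precisely the constraint at the accepting state (and pins down the normalizing constant $\tfrac12$), while the weights at the remaining, partially-matched states are then forced. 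Since $\mathcal A$ is finite this is a finite computation — $b_w$ comes out as a ratio of products of integer linear forms $\alpha m+\beta$, hence is effectively computable — and one checks that its numerator and denominator have equal degree and leading coefficient, so $b_w(m)\to1$ and the products and series converge.

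The step I expect to be the main obstacle is precisely the closing-up of this unfolding into \emph{finitely many} linear factors. When $w$ has no self-overlap at its right end — e.g.\ $w=10$, where no occurrence of $w$ can ever straddle the junction in $(\text{expansion of }j)\,w'$ — the unfolding terminates after one step and Theorem~\ref{3} gives $b_w(m)=\frac{2^{|w|}m+v(w)}{2^{|w|}m+v(w)+1}$ (using $2g=2^{|w|}$) outright; but when $w$ overlaps itself — e.g.\ $w=11$, where after matching $11$ the automaton returns to the state ``one $1$ matched'' — a naive unfolding never terminates, each step spawning a fresh correction indexed by an ever-longer self-overlapping tail, and the real content of the proof is that these corrections nonetheless assemble into a rational $b_w$, i.e.\ that the recursion on the automaton states closes. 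Two further points need care: the degenerate word $w=0^j$, whose index set begins one step later (sum over $n\ge1$), so that a stray constant must be tracked through the reductions — this is the origin of the exceptional $f(0)$-term in the paper's main theorem — and the mode of convergence, since for $|X|=1$, $X\ne1$ the regrouped series $-\sum_k X^k$ converges only in the Abel sense, so \eqref{eq:b} in un-regrouped form must be recovered from the decay $\log_2 b_w(m)=O(1/m)$ together with summation by parts, just as in the Woods--Robbins identity~\eqref{eq:rw}.
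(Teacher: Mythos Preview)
This paper does not prove Theorem~\ref{10}: it is quoted from Allouche--Shallit~\cite{infprod} in the introduction as one of the results being generalized, and the paper's own contribution (Theorem~\ref{th:main}) recovers it only in the specialization $B=2$, $X=-1$, $f=L$ discussed in Section~4. There is therefore no proof in the present paper to compare your proposal against.

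That said, your sketch is a reasonable outline of how the original argument in~\cite{infprod} goes, and the automaton language is a natural way to organize the bookkeeping. The genuine gap is exactly the one you name: for self-overlapping $w$ you assert that ``the recursion on the automaton states closes'' because $\mathcal A$ is finite, but finiteness of the state set does not by itself prevent the iterated corrections from cycling indefinitely or their coefficients from blowing up. What is needed is a monovariant that strictly decreases at each step. In~\cite{infprod} this is supplied by their Lemma~4 (mentioned in Section~4 here), whose role is taken in the present paper by Lemma~\ref{it}: each application drops the exponent $s$ in $a_w(2^s n+m)$ by one, so the process terminates in exactly $|w|$ steps irrespective of overlaps, and the output $b_w$ is visibly a finite product of linear factors. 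Your KMP picture is morally the same mechanism, but to make it a proof you would have to exhibit this descent explicitly rather than appeal to finiteness of $\mathcal A$. The side issues you raise --- the shifted index for $w=0^j$ and the Abel-to-ordinary convergence step on $|X|=1$ --- are real and are handled in~\cite{infprod} along the lines you indicate.
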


By setting $X=-1,\;w=1$ in equation \ref{eq:b} and exponentiating we rediscover equation \ref{eq:rw}. Other values of $w$ give new results; 
for example, 
\begin{equation}
 \prod\limits_{n\geq 0} \bigg (\frac{(4n+2)(8n+7)(8n+3)(16n+10)}{(4n+3)(8n+6)(8n+2)(16n+11)} \bigg )^{(-1)^{a_{1010}(n)}}=\frac{\sqrt{2}}{2}. 
\end{equation}

In this article we generalize the results in \cite{infprod}: our result applies to any base and all functions that verify certain 
convergence conditions.

\section{Notation}
We let $\mathbb{N}$ denote the set of non-negative integers. Let $B$ be an integer greater than 1. Let $w$ be a finite non-empty word over
$\{0,...,B-1\}$ (that is, $w\in \{0,...,B-1\}^* $).
Let $v_B:\{0,...,B-1\}^* \rightarrow \mathbb{N}$ be the map that assigns to $w$ its value when interpreted in base $B$. 
For example, $v_2(110)=6$. Let $|w|$
denote the length of $w$. For $x\in \{0,...,B-1\}$, let $\hat{x}$ denote $x+1\;\textnormal{mod } B$.

Let $a_{w,B}(n)$ count the number of (possibly overlapping) occurrences of the block $w$ in the expansion
of $n$ in base $B$. For example, $a_{22,3}(26)=2$. We use the same convention as in \cite{hurwitz} in the case where $w$ starts with a zero; if $w\neq 0^j$, 
then in evaluating $a_{w,B}(n)$ we assume that the expansion of $n$ starts with an arbitrarily long prefix of zeros. 
Thus $a_{011,2}(6)=1$. If $w=0^j$, we use the expansion of $n$ which starts with a non-zero digit. This means in particular that $a_{0,B}(0)=0$. 

To simplify notation, we write $a(n)$ instead of $(-1)^{a_{w,B}(n)}$ when there is no confusion.

\section{The main result}\label{sec:main}
Our goal in this section is to prove the existence of a finite set $L_{w,B}$ of pairs $(l,c_l)$ where $l$ 
is a first degree integer coefficient polynomial and $c_l$ an integer such that:
$$\sum\limits_{n\geq 0} (-1)^{a_{w,B}(n)}\sum\limits_{(l,c_l)\in L_{w,B}} c_l f(l(n))=
\begin{cases}0 &\mbox{ if } w\neq 0^j,\\
 -2a(0)f0) &\mbox{ if } w=0^j
\end{cases}$$
where $f$ is any function verifying certain convergence conditions that will be made precise later.

First we note the following proposition which will be proved in Section \ref{sec:conv}.
\newtheorem{prop}{Proposition}
\begin{prop}
Let $S(n)=\sum\limits_{k=0}^{n-1} a(k)$, where $a(n)=(-1)^{a_{w,B}(n)}$ and $w$ is a non-empty word over $\{0,\ldots,B-1\}$ of length $k$. Then 
$$|S(n)|\begin{cases}

=1 \mbox{ or } 0 &\mbox{if } B=2 \mbox{ and } k=1\\
\leq 2\cdot \lceil \log_3 (n) \rceil &\mbox{if } B=3 \mbox{ and } k=1\\ 
=O(n^{\alpha}) \mbox{ where } \alpha=\log_{V}(V-2)<1 \mbox{ and }\; V=B^k &\mbox{otherwise}.

       \end{cases}
$$
\end{prop}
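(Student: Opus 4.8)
The plan is to analyze the partial sums $S(n) = \sum_{k=0}^{n-1} a(k)$ by exploiting the self-similar structure of $a_{w,B}$ on blocks of length a multiple of $k = |w|$. First I would reduce to estimating $S(V^m)$ where $V = B^k$: writing $n$ in base $V$, a standard digit-by-digit decomposition expresses $S(n)$ as a telescoping combination of sums of the form $\sum_{j \in J} \pm S_j(V^{m})$ over at most $\log_V n$ scales, where $S_j$ denotes a partial sum started from an offset; so it suffices to control each block sum $\sum_{c \le k < c+V^m} a(k)$ uniformly. The key structural observation is that $a_{w,B}(Vk + r) = a_{w,B}(k) + \varepsilon(\text{digits of } r, \text{low digits of } k)$, i.e. passing from $k$ to its $V$ children in base $V$ adds a correction depending only on boundary digits; hence over a full block of $V$ consecutive integers sharing the same high part, the multiset of added digit-blocks $w$ is almost independent of the high part. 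This should give a recursion of the shape $|S(V^{m+1})| \le (V-2)|S(V^m)| + (\text{bounded boundary term})$, because among the $V$ residues exactly two (those completing or destroying an occurrence of $w$ at the junction) behave exceptionally while the other $V-2$ contribute cancelling $\pm 1$ factors that multiply the previous-level sum.

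Next I would solve this recursion. Iterating $|S(V^{m})| \le (V-2)^m \cdot |S(1)| + C\sum_{i<m}(V-2)^i$ gives $|S(V^m)| = O((V-2)^m) = O((V^m)^{\alpha})$ with $\alpha = \log_V(V-2)$, and then summing the $O(\log_V n)$ scales in the base-$V$ decomposition of a general $n$ keeps the bound $O(n^\alpha)$ since $\alpha < 1$ dominates the logarithmic factor. For the two special cases one argues more sharply. When $B = 2$ and $k = 1$ the word is a single letter, $a(k) = (-1)^{s_2(k)}$ up to sign, and the Thue–Morse partial sums are classically $0$ or $\pm 1$; here $V - 2 = 0$, so the recursion degenerates and $|S(2^m)| \le 1$, which propagates to $|S(n)| \le 1$. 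When $B = 3$ and $k = 1$ we have $V - 2 = 1$, so the recursion is $|S(3^{m+1})| \le |S(3^m)| + O(1)$, giving linear-in-$m$, i.e. logarithmic-in-$n$, growth; tracking the constant carefully (the boundary correction is at most $1$ in absolute value at each of the $\le \lceil \log_3 n\rceil$ steps, doubled because a general $n$ needs both a "left" and "right" accumulation) yields the stated $|S(n)| \le 2\lceil \log_3 n\rceil$.

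The main obstacle I anticipate is making the "junction correction" bookkeeping precise and uniform: when $w$ overlaps itself (e.g. $w = 22$ in base $3$, where $a_{22,3}(26) = 2$), an occurrence of $w$ can straddle the boundary between the high part and the low block, and a single appended digit can create or destroy a variable number of occurrences depending on a suffix of the high part of length up to $k-1$. Handling this requires either passing to blocks of length a large multiple $Mk$ of $k$ (so that interior occurrences vastly outnumber boundary ones and the $V \mapsto V^M$, $\alpha$-value is unchanged) or setting up an auxiliary finite automaton / transfer-matrix whose states record the last $k-1$ digits, and showing its relevant transition matrix has the claimed spectral radius $V - 2$ on the $\pm 1$-weighted space. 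The cases $B=2,k=1$ and $B=3,k=1$ avoid this entirely because $w$ is a single letter with no self-overlap, which is exactly why sharper bounds are available there. I would also need to verify the convergence/absolute-convergence consequences this estimate is designed to feed into, but that is deferred to Section~\ref{sec:conv}.
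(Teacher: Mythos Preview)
Your high-level architecture---write $n$ in base $V=B^k$ and bound the full block sums $b_m(j)=\sum_{i<V^m}a(jV^m+i)$ by $(V-2)^m$---is exactly the paper's, but the mechanism you propose for that bound does not work as stated. The claim that ``among the $V$ residues exactly two behave exceptionally while the other $V-2$ contribute $\pm1$ times the previous-level sum'' is false in general: for $w=101$, $B=2$, six of the eight blocks $[j\cdot 8^m,(j{+}1)8^m)$ carry a boundary sign depending on the top one or two base-$2$ digits of $i$, not merely two of them. Consequently a recursion $|S(V^{m+1})|\le(V-2)|S(V^m)|+O(1)$ is not available; note that $S(V^{m+1})=\sum_{j=0}^{V-1}b_m(j)$ while $S(V^m)=b_m(0)$, and the various $b_m(j)$ are not $\pm b_m(0)$ up to bounded error. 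The paper does not recurse on $S(V^m)$ at all; it proves the \emph{uniform} bound $|b_i(n)|\le(V-2)^i$ for every $n$.

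The device that replaces both of your proposed workarounds (block-lengthening, transfer matrix) is a short pairing lemma: for any word $u$ there exist two length-$k$ words $v,v'$, differing only in their last letter, with $a\bigl(B^{k+|u|}n+B^{|u|}v_B(v)+v_B(u)\bigr)=-a\bigl(B^{k+|u|}n+B^{|u|}v_B(v')+v_B(u)\bigr)$ for all $n$. One constructs $v$ so that exactly one copy of $w$ straddles the $v/u$ junction (using the longest prefix of $u$ that is a proper suffix of $w$), and $v'$ kills that single copy. Inside the identity $b_{i+1}(n)=\sum_{j\in S_i}\sum_{m=0}^{V-1}a(V^{i+1}n+V^im+j)$ this cancels one pair in each inner sum, so by induction $b_i(n)$ is a signed sum of $(V-2)^i$ values of $a$, giving $|b_i(n)|\le(V-2)^i$. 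The base-$V$ digit expansion $n=\sum_f n_fV^f$ then yields $|S(n)|\le\sum_f n_f(V-2)^f$, from which all three cases of the Proposition drop out directly (the sum is $\le n_0\in\{0,1\}$ when $V=2$, $\le 2\lceil\log_3 n\rceil$ when $V=3$, and $O((V-2)^{\log_V n})=O(n^{\alpha})$ otherwise).
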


\theoremstyle{remark}
\newtheorem*{remark}{Remark}
\begin{remark}
 By Theorem 3.1 in \cite{regular} we know that $S(n)$ is a regular sequence, and Theorem 2.10 from the same article confirms that there exists
 a constant $c$ such that $S(n)=O(n^c)$.
\end{remark}

%注意两边求和范围 0的问题
The following lemma is inspired by the general lemma in \cite{hurwitz}.

\begin{lemma}\label{lm:main}
 Let $B$ be an integer, $B\geq 2$, let $w$ be a word over $\{0,\ldots,B-1\}$ ending in the symbol $e$ 
 and let $f:\mathbb{N}\rightarrow \mathbb{C}$ be a function such that 
 $f(n)=O(n^{\beta})$ and $f(n+1)-f(n)=O(n^{\beta-1})$ for $\beta<0$ if $B=2$ or $3$ and $k=1$, and  $\beta <-\alpha$ otherwise. 
  Then
 \begin{equation}\label{eq:main}\sum\limits_{n\geq 0} a(n)(f(n)-\sum\limits_{j=0}^{B-1}f(Bn+j))
  =2\sum\limits_{m}a(B^{|w|}m+v_B(w))f(B^{|w|}m+v_B(w)),
 \end{equation}
 where the last summation is taken over $m\geq 0$, except when $w=0^j$, where it is taken over $m\geq 1$.

\end{lemma}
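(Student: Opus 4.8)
The plan is to convert the left-hand side of (\ref{eq:main}) into an exact identity between \emph{finite} partial sums, read off the ``defect'' that survives a telescoping, and then pass to the limit using Proposition~1. Throughout write $a(n)=(-1)^{a_{w,B}(n)}$, $k=|w|$, $V=B^{k}$, and interpret $\sum_{n\geq0}a(n)\bigl(f(n)-\sum_{j=0}^{B-1}f(Bn+j)\bigr)$ as $\lim_{M\to\infty}\Sigma_M$ with $\Sigma_M:=\sum_{n=0}^{M-1}a(n)\bigl(f(n)-\sum_{j=0}^{B-1}f(Bn+j)\bigr)$. The first step is a reindexing: $(n,j)\mapsto Bn+j$ is a bijection of $\mathbb{N}\times\{0,\dots,B-1\}$ onto $\mathbb{N}$ with inverse $N\mapsto(\lfloor N/B\rfloor,\ N\bmod B)$, so $\sum_{n=0}^{M-1}a(n)\sum_{j=0}^{B-1}f(Bn+j)=\sum_{N=0}^{BM-1}a(\lfloor N/B\rfloor)f(N)$. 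Subtracting this from $\sum_{n=0}^{M-1}a(n)f(n)$ and splitting the latter sum at $N=M$ gives
$$\Sigma_M=\sum_{n=0}^{M-1}g(n)f(n)\;-\;\sum_{N=M}^{BM-1}a(\lfloor N/B\rfloor)f(N),\qquad g(n):=a(n)-a(\lfloor n/B\rfloor),$$
so the whole statement reduces to: (i) computing the defect sequence $g$, and (ii) controlling the two sums on the right as $M\to\infty$.

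For (i) I would use the elementary digit recursion (with the conventions on $w$ of \cite{hurwitz}): appending a digit $j$ to the base-$B$ expansion of $n$ creates exactly one new occurrence of $w$ when the resulting length-$k$ suffix equals $w$, and none otherwise, i.e.
$$a_{w,B}(Bn+j)=a_{w,B}(n)+\mathbf{1}\!\left[\,Bn+j\equiv v_B(w)\ (\mathrm{mod}\ V)\,\right].$$
When $w\neq 0^{j}$ this holds for \emph{all} $n\geq0$ — the leading-zero padding is exactly what makes the congruence description correct — and $1\leq v_B(w)\leq V-1$; hence $g(n)=0$ unless $n\equiv v_B(w)\pmod V$, and for such $n$ one has $a(n)=-a(\lfloor n/B\rfloor)$, so $g(n)=2a(n)$. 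When $w=0^{j}$ one has $v_B(w)=0$, the recursion holds for $n\geq1$, and the values $n\in\{0,\dots,B-1\}$ are checked by hand (there $a(n)=1=a(0)$, so $g(n)=0$); the outcome is $g(n)=2a(n)$ for $n$ a positive multiple of $V$ and $g(n)=0$ otherwise. In either case, summing $\sum_{0\leq n<M}g(n)f(n)$ and reindexing the surviving terms as $n=Vm+v_B(w)$ produces $2\sum_m a(Vm+v_B(w))f(Vm+v_B(w))$, the range being $m\geq0$ when $w\neq0^{j}$ (all arguments are then positive) and $m\geq1$ when $w=0^{j}$ — precisely the right-hand side of (\ref{eq:main}).

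Step (ii) is the analytic core, and I expect it to be the main obstacle. Proposition~1 gives $S(M):=\sum_{n<M}a(n)=O(M^{\alpha})$, with the bound $O(\log M)$ when $(B,k)=(3,1)$ and $O(1)$ when $(B,k)=(2,1)$. Summing over the residue of $N$ modulo $B$ yields $\sum_{N<N_0}a(\lfloor N/B\rfloor)=B\,S(\lfloor N_0/B\rfloor)+O(1)=O(N_0^{\alpha})$, and hence also $\sum_{n<N_0}g(n)=S(N_0)-\sum_{n<N_0}a(\lfloor n/B\rfloor)=O(N_0^{\alpha})$. Since $f(n)=O(n^{\beta})$ (so $f(n)\to0$) and $f(n+1)-f(n)=O(n^{\beta-1})$ with $\alpha+\beta<1$ — i.e.\ the hypothesis $\beta<-\alpha$, or $\beta<0$ in the two special cases — summation by parts against these partial-sum bounds shows that $\sum_n g(n)f(n)$ converges and that the remainder $\sum_{N=M}^{BM-1}a(\lfloor N/B\rfloor)f(N)$ (which is $O(M^{\alpha+\beta})$, up to a logarithmic factor in the special cases) tends to $0$. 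Letting $M\to\infty$ in the displayed identity then gives $\sum_{n\geq0}a(n)\bigl(f(n)-\sum_jf(Bn+j)\bigr)=\sum_n g(n)f(n)$, which is (\ref{eq:main}); convergence of the series on the right of (\ref{eq:main}) is part of what the Abel estimate delivers. The delicate points are organizing these summation-by-parts estimates so that one argument covers all three regimes of Proposition~1 uniformly, and checking that the rearrangements above are legitimate despite the absence of absolute convergence.
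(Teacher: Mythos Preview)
Your argument is correct and rests on the same digit-recursion identity as the paper's, but the organization differs. The paper works directly with the infinite series: it cites Corollary~\ref{coro} once for the convergence of $\sum a(n)f(n)$ and of each $\sum a(n)f(Bn+j)$, rewrites $\sum_{n\geq0}a(n)f(n)=\sum_{j}\sum_{n}a(Bn+j)f(Bn+j)$, and then exploits that $w$ ends in the fixed symbol $e$ to cancel every $j\neq e$ immediately (since $a(Bn+j)=a(n)$ for those $j$), leaving only $\sum_n\bigl(a(Bn+e)-a(n)\bigr)f(Bn+e)$; a residue condition on $n$ modulo $B^{|w|-1}$ then finishes. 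You instead pass through finite partial sums, introduce the defect $g(n)=a(n)-a(\lfloor n/B\rfloor)$, locate its support in a single step via the congruence modulo $B^{|w|}$, and dispose of the boundary tail $\sum_{M\le N<BM}a(\lfloor N/B\rfloor)f(N)$ by an explicit Abel estimate. Your route makes the legitimacy of the rearrangements of conditionally convergent series completely transparent, at the price of redoing the summation-by-parts bound that Corollary~\ref{coro} already packages; the paper's version is shorter but leans on that corollary and on the ``last digit $e$'' observation, which you bypass.
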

\begin{proof}
The convergence of $\sum a(n)f(n)$ and $ \sum a(n)f(Bn+j)$ is assured by Corollary \ref{coro} in \mbox{Section \ref{sec:conv}}.
Let $e$ belong to $\{0,\ldots,B-1\}$.
\begin{align*}
&\sum\limits_{n\geq 0} a(n) (f(n)-\sum\limits_{j=0} ^{B-1} f(Bn+j))
\\=& \sum\limits_{n\geq 0} \sum\limits_{j=0} ^{B-1} a(Bn+j)f(Bn+j)-\sum\limits_{n\geq 0} \sum\limits_{j=0} ^{B-1} a(n)f(Bn+j) 
\\=& \sum\limits_{n\geq 0}  a(Bn+e)f(Bn+e)-\sum\limits_{n\geq 0} a(n)f(Bn+e).
\end{align*}
If $w\neq 0^j$, one has:
$$a(Bn+e)=\begin{cases}
           -a(n) &\mbox{if } \exists m\geq 0 \mbox{ such that } n=B^{|w|-1}m+\lfloor \frac{v_B(w)}{B}\rfloor\\
           a(n) &\mbox{otherwise}.
          \end{cases}
$$
If $w= 0^j$, one has:
$$a(Bn+e)=\begin{cases}
           -a(n) &\mbox{if } \exists m \geq 1 \mbox{ such that } n=B^{|w|-1}m+\lfloor \frac{v_B(w)}{B}\rfloor\\
           a(n) &\mbox{otherwise}.
          \end{cases}
$$
Hence $$\sum\limits_{n\geq 0} a(n)(f(n)-\sum\limits_{j=0}^{B-1}f(Bn+j))=2\sum\limits_{m}a(B^{|w|}m+v_B(w))f(B^{|w|}m+v_B(w)),$$
 where the last summation is taken over $m\geq 0$, except when $w=0^j$, where it is taken over $m\geq 1$.

\end{proof}

For example, for $w=11$ and $B=2$, letting $a(n)$ denote $(-1)^{a_{11,2}(n)}$, we find
\begin{equation}\label{eq:sha}
\sum\limits_{n\geq 0} a(n)(f(n)-f(2n)-f(2n+1))=2\sum\limits_{m\geq 0}a(4m+3)f(4m+3). \end{equation}

The next step consists of transforming the sum with $a(4m+3)$ on the right to a sum with $a(m)$. First, noticing that
$\{4m+3|m\in\mathbb{N} \}=\{2m+1|m\in\mathbb{N} \}\backslash \{4m+1|m\in\mathbb{N} \}$, we split the sum into two sums. 
Then, we replace $a(4m+1)$ by $a(m)$, as $01$ is not a suffix of $11$. And we continue like this:

\begin{align*}
 \sum\limits_{m\geq 0}a(4m+3)f(4m+3)&= \sum\limits_{m\geq 0}a(2m+1)f(2m+1)- \sum\limits_{m\geq 0}a(4m+1)f(4m+1)\\
 &= \sum\limits_{m\geq 0}a(2m+1)f(2m+1)- \sum\limits_{m\geq 0}a(m)f(4m+1)\\
 &=\sum\limits_{m\geq 0}a(m)f(m)-\sum\limits_{m\geq 0}a(2m)f(2m)- \sum\limits_{m\geq 0}a(m)f(4m+1)\\
 &=\sum\limits_{m\geq 0}a(m)(f(m)-f(2m)-f(4m+1)).
\end{align*}

Substituting this in Equation \ref{eq:sha}, we get
\begin{equation}\label{eq:shap}
\sum\limits_{n\geq 0}a(n)(-f(n)+f(2n)-f(2n+1)+2f(4n+1))=0.
\end{equation}

The following lemma describes each step of the process in detail:

\begin{lemma}\label{it}
 Let $w$ be a non-empty word over $\{0,\ldots,B-1\}$ of length $k$, $s$ and $t$ positive integers such that $s\leq t \leq |w|$, $x$ an integer,
 and $m$ an integer whose base $B$ expansion is $b_1b_2...b_k$,
with possible leading zeros.

(A) If $b_1b_2...b_s$ is not a suffix of $w$, then
$$ \sum\limits_{n\geq 0} a(B^{s}n+m)f(B^{t}n+x) = \sum\limits_{n\geq 0} a(B^{s-1}n+v_B(b_1...b_{s-1}))f(B^{t}n+x).$$
 
(B) If $b_1b_2...b_s$ is a suffix of $w$, then
\begin{align*}\sum\limits_{n\geq 0} a(B^{s}n+m)f(B^{t}n+x)= \sum\limits_{n\geq 0} a(B^{s-1}n+v_B(b_2...b_s))f(B^{t-1}n+x-B^{t-1}b_1)
\\ -\sum\limits_{b\in \{0,\ldots,B-1\}\backslash \{b_1\}}\sum\limits_{n\geq 0} a(B^{s-1}n+v_B(bb_2...b_{s-1}))f(B^{t}n+x+B^{t-1}(b-b_1)).
\end{align*}
\end{lemma}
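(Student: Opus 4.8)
The plan is to prove both (A) and (B) by manipulating the defining relation of $a$ under the map $n \mapsto Bn+b$, exactly as in the worked example preceding the lemma, but tracking the polynomial $B^t n + x$ inside $f$ carefully. The key observation is that the base-$B$ expansion of $B^s n + m$ (with $0 \le m < B^s$) is obtained by appending the $s$-digit string $b_1 b_2 \ldots b_s$ to the expansion of $n$; equivalently, writing $n = B n' + b$ with $b \in \{0,\ldots,B-1\}$, the last $s$ digits of $B^s n + m$ are $b_1 \ldots b_{s-1} b$ and the preceding digits are those of $B^{s-1}n' + v_B(b_1\ldots b_{s-1})$, shifted. So I would first split every sum over $n \ge 0$ according to the last digit $b$ of $n$, writing $n = Bn'+b$, which turns $\sum_{n\ge 0} a(B^s n + m) f(B^t n + x)$ into $\sum_{b=0}^{B-1}\sum_{n'\ge 0} a(B^{s+1}n' + B m + b)\, f(B^{t+1}n' + B x + b)$, and similarly expand the target right-hand sides.

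For part (A), when $b_1\ldots b_s$ is not a suffix of $w$: I claim $a(B^s n + m) = a(B^{s-1}n + v_B(b_1\ldots b_{s-1}))$ for every $n \ge 0$. Indeed, appending the digit $b_s$ to the string ending in $b_1\ldots b_{s-1}$ can only create a new occurrence of $w$ at the very end, and that would force $b_1\ldots b_s$ to be a suffix of $w$; more carefully, every occurrence of $w$ that uses the final digit $b_s$ of $B^sn+m$ corresponds to an occurrence in $B^{s-1}n + v_B(b_1\ldots b_{s-1})$ ending one position earlier — here one must be slightly careful when $s < |w|$, because an occurrence of $w$ could straddle the boundary, but since $|w| \le ?$... actually $s \le t \le |w|$ is assumed, and the point is that the relevant occurrences either lie entirely in the "old" part or end exactly at position $s$; the hypothesis that $b_1\ldots b_s$ is not a suffix of $w$ rules out the latter. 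Hence $a_{w,B}(B^s n + m) \equiv a_{w,B}(B^{s-1}n + v_B(b_1\ldots b_{s-1})) \pmod 2$, and the two sums over the same polynomial $B^t n + x$ in $f$ are literally equal term by term — no convergence issue arises since we are not reindexing $f$.

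For part (B), when $b_1\ldots b_s$ \emph{is} a suffix of $w$: the natural move is to use the set identity $\{B^s n + m : n \ge 0\} = \{B^{s-1}n + v_B(b_2\ldots b_s) : n\ge 0, \text{ digit } b_1 \text{ in position } s-1 \text{ from the bottom... }\}$, i.e. to peel off the leading digit $b_1$ of the block rather than the trailing one. Concretely, $\{n \ge 0 : \text{last } s \text{ digits of } n \text{ are } b_1\ldots b_s\}$ equals $\{n : \text{last } s-1 \text{ digits are } b_2\ldots b_s\}$ minus $\bigcup_{b\ne b_1}\{n : \text{last } s \text{ digits are } b\, b_2\ldots b_s\}$. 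Translating: $\sum_{n\ge 0} a(B^s n + m) f(B^t n + x)$ — where $a$ on this set can be replaced using that $b_1\ldots b_s$ being a suffix of $w$ relates the parity back to shorter blocks — becomes $\sum_{n\ge 0} a(B^{s-1}n + v_B(b_2\ldots b_s)) f(\cdot)$ minus the sum over $b \ne b_1$. The reindexing needed to match the stated polynomials is: on the "full" sum we write $B^{s-1}n + v_B(b_2\ldots b_s)$ with $n$ ranging over all of $\mathbb{N}$, and the element $B^s n + m$ corresponds to $n \mapsto Bn + b_1$ inside it, so $B^t n + x = B^{t-1}(Bn) + x$ and with the substitution the argument of $f$ on the first term is $B^{t-1}n + x - B^{t-1}b_1$, which matches; the error terms with $b \ne b_1$ keep the block length $s$ hence $B^t n$, with argument shifted by $B^{t-1}(b - b_1)$. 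I would verify each of these index substitutions is a bijection $\mathbb{N}\to\mathbb{N}$ on the relevant arithmetic progressions, so that (by the absolute convergence guaranteed by Corollary~\ref{coro}) rearrangement is legitimate.

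The main obstacle I anticipate is \emph{not} the bookkeeping of the affine substitutions inside $f$ — that is routine once the digit-splitting picture is fixed — but rather justifying the parity identities for $a_{w,B}$ cleanly in the regime $s < |w|$, where an occurrence of $w$ can overlap the boundary between the "controlled" suffix $b_1\ldots b_s$ and the digits of $n$ above it. One must argue that such straddling occurrences are in bijection on the two sides being equated (they depend only on digits that are preserved under the manipulation), so they cancel and only the genuinely new or lost occurrence at the seam — controlled by whether $b_1\ldots b_s$ is or is not a suffix of $w$ — contributes the sign change. This is essentially the content already used implicitly in the proof of Lemma~\ref{lm:main} (the case $s = |w|$, $t = |w|$), and I would phrase the general case as an induction on $|w| - s$, invoking that base case and the one-step digit peeling above.
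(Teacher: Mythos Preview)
Your core approach matches the paper's: (A) is exactly the termwise identity you state, and (B) is exactly the set decomposition you describe (peel off the leading digit $b_1$ by writing the progression with last $s-1$ digits $b_2\ldots b_s$ as a disjoint union over the $s$-th digit), after which the paper simply applies (A) to each $b\ne b_1$ error term, using that if $b_1\ldots b_s$ is a suffix of $w$ then $bb_2\ldots b_s$ cannot be.

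Two corrections to your plan. First, the worry about ``straddling'' occurrences in the regime $s<|w|$, and the proposed induction on $|w|-s$, are unnecessary. Going from $B^{s-1}n+v_B(b_1\ldots b_{s-1})$ to $B^s n+m$ appends the single digit $b_s$; the only occurrence of $w$ that can appear or disappear is one ending at that new final position, and since $s\le|w|$ its last $s$ symbols are forced to be $b_1\ldots b_s$. The hypothesis of (A) rules this out directly, for every $s\le|w|$ at once; any occurrence of $w$ that genuinely straddles the boundary between $n$ and the block $b_1\ldots b_{s-1}$ is present identically on both sides and cancels. Second, Corollary~\ref{coro} gives only conditional convergence (it is proved by Abel summation), not absolute convergence, so you cannot appeal to arbitrary rearrangement. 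What saves you is that the manipulation in (B) is merely a partition of $\mathbb{N}$ into finitely many arithmetic sub-progressions, and each of the resulting sub-sums converges by the same corollary; that is enough. Finally, your opening move of writing $n=Bn'+b$ raises $s$ to $s{+}1$ and $t$ to $t{+}1$, i.e.\ goes the wrong way, and you never actually use it---drop it.
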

\begin{proof}
 To prove (A), we only need to note that if $b_1b_2...b_s$ is not a suffix of $w$, then $a_{w,B}(B^{s}+b_1b_2...b_s)=
 a_{w,B}(B^{s-1}+b_1b_2...b_{s-1})$.\\
Now suppose that $b_1b_2...b_s$ is a suffix of $w$:
\begin{align*}
 & \{ (B^{s-1}n+v_B(b_2...b_s),B^{t-1}n+x-B^{t-1}b_1)| n\in \mathbb{N} \} \\
 =& \bigcup_{ b\in \{0,\ldots,B-1\}} \{B^{s-1}(Bn+b)+v_B(b_2...b_s),B^{t-1}(Bn+b)+x-B^{t-1}b_1)| n\in \mathbb{N} \}\\
 =&  \{(B^{s-1}(Bn+b_1)+v_B(b_2...b_s),B^{t-1}(Bn+b_1)+x-B^{t-1}b_1)| n\in \mathbb{N} \} \cup \\
 &  \bigcup_{ b\in \{0,\ldots,B-1\}\backslash \{b_1\}} \{(B^{s-1}(Bn+b)+v_B(b_2...b_s),B^{t-1}(Bn+b)+x-B^{t-1}b_1)| n\in \mathbb{N} \}\\
 =&\{ (B^{s}n+v_B(b_1...b_s), B^t n+x)|n\in \mathbb{N} \}\cup \\
  &\bigcup_{ b\in \{0,\ldots,B-1\}\backslash \{b_1\}} \{(B^{s}n+v_B(bb_2...b_s),B^{t}n+x+B^{t-1}(b-b_1))| n\in \mathbb{N} \}.
\end{align*}
 As  $b_1b_2...b_s$ is a suffix of $w$, $bb_2...b_s$ cannot be a suffix of $w$ for $ b\in \{0,\ldots,B-1\}\backslash \{b_1\}$, therefore
 $a(B^{s}n+v_B(bb_2...b_s))=a(B^{s-1}n+v_B(bb_2...b_{s-1}))$ for $b\neq b_1$, which proves (B).
\end{proof}

Iterating the process above to the sum on the right of Equation \ref{eq:main} gives us the desired result:

\begin{thm}\label{th:main}
Let $f:\mathbb{N}\rightarrow \mathbb{C}$ be a function such that 
 $f(n)=O(n^{\beta})$ and $f(n+1)-f(n)=O(n^{\beta-1})$ for $\beta<0$ if $B=2$ or $3$ and $k=1$, and  $\beta <-\alpha$ otherwise. 
 There exists a a finite set $L_{w,B}$ of pairs $(l,c_l)$ where $l$ 
is a polynomial with integer coefficients of degree 1 and $c_l$ an integer such that:

$$\sum\limits_{n\geq 0} (-1)^{a_{w,B}(n)}\sum\limits_{(l,c_l)\in L_{w,B}} c_l f(l(n))=
\begin{cases}0 &\mbox{ if } w\neq 0^j,\\
 -2a(0)f(0) &\mbox{ if } w=0^j.
\end{cases}$$

\end{thm}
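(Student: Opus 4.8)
The plan is to start from Lemma~\ref{lm:main}, which gives
$$\sum_{n\geq 0} a(n)\Bigl(f(n)-\sum_{j=0}^{B-1}f(Bn+j)\Bigr)
 =2\sum_{m}a(B^{|w|}m+v_B(w))f(B^{|w|}m+v_B(w)),$$
and to rewrite the right-hand side, term by term, as a linear combination of sums of the form $\sum_{n\geq 0}a(n)f(l(n))$ with $l$ a degree-one integer polynomial. The left-hand side is already of this shape (with $l$ among $n$ and $Bn+j$), so once the right-hand side is transformed, collecting all the coefficients into a finite set $L_{w,B}$ of pairs $(l,c_l)$ gives the stated identity; when $w\neq 0^j$ the full equation rearranges to $\sum_n a(n)\sum_{(l,c_l)}c_l f(l(n))=0$, and when $w=0^j$ the only difference is that the right-hand summation starts at $m\geq 1$ rather than $m\geq 0$, so one extra term $-2a(v_B(w))f(v_B(w))=-2a(0)f(0)$ appears (using $v_B(0^j)=0$).

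The heart of the argument is the iteration of Lemma~\ref{it}. I would introduce a \emph{descent measure} on a sum $\sum_{n\geq 0}a(B^sn+m)f(B^tn+x)$, for instance the pair $(s,\ \text{something})$ ordered appropriately, and argue that each application of Lemma~\ref{it} — case (A) when $b_1\cdots b_s$ is not a suffix of $w$, case (B) when it is — strictly decreases this measure. In case (A) the exponent $s$ of $B$ inside the $a$-argument drops by one while $t$ is unchanged; in case (B) both $s$ and $t$ drop by one, and the new terms $a(B^{s-1}n+v_B(bb_2\cdots b_{s-1}))f(B^tn+\cdots)$ for $b\neq b_1$ have strictly smaller $s$-value, so case~(A) again applies to them until $s$ reaches~$1$. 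When $s=1$ we are left with $\sum_n a(Bn+b_1)f(B^tn+x)$ or simply $\sum_n a(n)f(B^tn+x)$, and one final step (or direct observation via the definition of $a$) reduces $a(Bn+b_1)$ to $\pm a(n)$, landing us in the target form $\sum_n a(n)f(l(n))$. Since each step produces finitely many new sums, each of strictly smaller measure, and the measure is bounded below, the process terminates; the finiteness of $L_{w,B}$ and the effectivity of the description both follow, and the whole bookkeeping can be run concretely on the $w=11,\ B=2$ example already worked out in Equations~\eqref{eq:sha}--\eqref{eq:shap}.

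Two bookkeeping points need care. First, one must track the affine arguments $l(n)=B^tn+x$ (and the shifts $x+B^{t-1}(b-b_1)$ in case (B)), verifying that they always have integer coefficients and that the constant terms stay non-negative, or at least that the partial sums still make sense and converge; the convergence is handled uniformly by Corollary~\ref{coro} (since $l(n)=O(n)$ and $l(n+1)-l(n)$ is constant, $f(l(n))$ inherits the decay and the first-difference bounds needed), so only the integrality and the base-$B$-digit conditions in Lemma~\ref{it} must be checked at each step. Second, the $w=0^j$ case has to be isolated at the very start: the summation on the right of Equation~\eqref{eq:main} runs over $m\geq 1$, so I would write it as (sum over $m\geq 0$) minus the $m=0$ term, apply the $w\neq 0^j$-style iteration to the $m\geq 0$ piece (the suffix combinatorics of $0^j$ being particularly simple — $b_1\cdots b_s$ is a suffix of $0^j$ iff it is $0^s$), and carry the leftover $-2a(0)f(0)$ through unchanged.

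The main obstacle I anticipate is making the termination argument genuinely rigorous rather than "clearly it stops": one has to choose the descent measure so that case (B), which \emph{increases} the number of summands, still decreases it, and so that the cascade of case-(A) reductions triggered on those new summands is visibly finite. A clean choice is to bound, for each summand, the quantity $s$ (or $s+t$, or a lexicographic pair with $s$ dominant), note it lies in $\{1,\dots,|w|\}$, and observe that from any configuration only finitely many reductions are possible before every summand has $s=1$; once there, one more uniform reduction (using $01\cdots$ not being a suffix of $w$, or the direct digit identity) finishes. The rest — integrality of coefficients, convergence via Corollary~\ref{coro}, and the $w=0^j$ adjustment — is routine given the lemmas already established.
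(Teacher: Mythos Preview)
Your proposal is correct and follows essentially the same route as the paper: rewrite Lemma~\ref{lm:main} with the right-hand side moved over (yielding $0$ or $-2a(0)f(0)$, the latter coming from extending the $m\geq 1$ sum to $m\geq 0$ when $w=0^j$), then iterate Lemma~\ref{it} to drive the exponent $s$ from $|w|$ down to $0$, each step producing at most $B$ new sums --- your descent-measure discussion simply makes explicit what the paper compresses into ``in $|w|$ steps''. One small slip: the claim that at $s=1$ one ``reduces $a(Bn+b_1)$ to $\pm a(n)$'' is not a pointwise identity when $b_1$ equals the last letter of $w$, but one more application of Lemma~\ref{it} (case~(B)) handles that case correctly at the level of sums, so the argument stands.
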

\begin{proof}
 First we rewrite Lemma \ref{lm:main} as 
 \begin{align*}&\sum\limits_{n\geq 0}a(n)(f(n)-\sum\limits_{j=0}^{B-1}f(Bn+j))-2\sum\limits_{m\geq 0}a(B^{|w|}m+v_B(m))f(B^{|w|}m+v_B(m))
 \\&=
 \begin{cases} 0 &\mbox{if } w \neq 0^j \\ 
  -2a(0)f(0) &\mbox{if } w=0^j
 \end{cases}.\end{align*}
Then we successively apply Lemma \ref{it} to 
 $\sum\limits_{m\geq 0}a(B^{|w|}m+v_B(w))f(B^{|w|}m+v_B(w))$. We verify easily that in both cases of Lemma \ref{it},
a sum with $a(B^sn+m)$ either becomes one sum or the sum of $B$ sums with $a(B^{s-1}n+v_B(v))$ where $v$ is an appropriate word 
of length $s-1$. After each iteration,
the new sums still verify the condition of Lemma \ref{it}. In $|w|$ steps we will have only sums of the form $\sum a(n)f(l(n))$,
where $l$ is a polynomial with integer coefficients of degree 1.
\end{proof}
It can be shown that the set $L_{w,B}$ is effectively computable using arguments similar to those found in \cite{algo}.

\begin{remark}
Given a function $g :\mathbb{N} \rightarrow \mathbb{C}$ that verifies the convergence condition of Corollary \ref{coro},
it is an interesting question to ask if there exists another function $f :\mathbb{N} \rightarrow \mathbb{C}$ such that
$$g(n)=-f(n)+f(2n)-f(2n+1)+2f(4n+1) \;\; \forall n\in \mathbb{N}. $$
It can be easily seen that there exists an infinity of choices for $f$. If we require $f$ to verify the convergence condition, 
this question becomes tricky. Equation \ref{eq:shap} tells us that such a function $f$ exists only if $g(0)=-\sum\limits_{n\geq 1}
(-1)^{a_{11,2}(n)}g(n)$, which is not evident to establish otherwise.
\end{remark}
 
\section{Link with previous results}

Theorem \ref{th:main} in this article contains the results in \cite{infprod} when $X$ in Theorem \ref{3} and Theorem \ref{10} is replaced by $-1$. 
In this section we first give an example, then we prove that the two methods always give the same identities where Theorem \ref{10} applies, that is,
for $B=2$, and $f(n)=L(n)$ with $L(n)=\log_2 (\frac{n}{n+1})$ if $n>0$ and $L(0)=0$.

Taking for example $w=11$, by Theorem \ref{th:main}, we have
$$\sum\limits_{n\geq 1} (-1)^{a_{11,2}(n)}\log_2 \bigg ( \frac{(2n+1)^2}{(n+1)(4n+1)}\bigg )=-\frac{1}{2}.$$
After exponentiating we find:
$$\prod\limits_{n\geq 1} \bigg ( \frac{(2n+1)^2}{(n+1)(4n+1)}\bigg )^{(-1)^{a_{11,2}}}=\frac{\sqrt{2}}{2}. $$
This can be obtained alternatively by substituting $-1$ for $X$ in  Theorem \ref{3} and applying susccessively Lemma 4 in \cite{infprod}. 

In fact, when we substitute $-1$ for $X$ 
in Theorem \ref{3}, we get
$$\sum\limits_n (-1)^{a_{w,2}(2^{|w|-1}n+\lfloor v_2(w)/2 \rfloor)} L(2^{|w|}n+v_2(w))=-\frac{1}{2}.$$
On the other hand, if we apply Lemma \ref{lm:main} to $f=L$, as 
$L(n)-L(2n)-L(2n+1)=\log_2(\frac{n}{n+1}\cdot \frac{2n+1}{2n}\cdot \frac{2n+2}{2n+1})=0 $ for $n\geq 1$,
the left side of the identity becomes
$$ -(-1)^{a_{w,2}(0)}L(1)+\sum\limits_{n\geq 1} (-1)^{a_{w,2}(n)}(L(n)-L(2n)-L(2n+1))=1, $$
and the right side,
$$ 2\sum\limits_{n} (-1)^{a_{w,2}(2^{|w|}+v_2(w))}f(2^{|w|}+v_2(w) =-2\sum\limits_{n} (-1)^{a_{w,2}(2^{|w|-1}+\lfloor v_2(w)/2 \rfloor)}f(2^{|w|}+v_2(w)).  $$
The identity in Lemma  \ref{lm:main}  becomes
$$\sum\limits_n (-1)^{a_{w,2}(2^{|w|-1}n+\lfloor v_2(w)/2 \rfloor)} L(2^{|w|}n+v_2{w})=-\frac{1}{2}.$$
This is why we always find the same result using the two methods when $B=2$ and $f=L$.

\section{Examples}
\theoremstyle{definition}
\newtheorem{example}{Example}
\begin{example}
Let $s(n)=(-1)^{a_{1,3}(n)}=(-1)^n$. By Lemma \ref{lm:main} we have
$$\sum\limits_{n\geq 0}s(n)(f(n)-f(3n)-f(3n+1)-f(3n+2))=2\sum\limits_{n\geq 0}s(3n+1)f(3n+1)=-2\sum\limits_{n\geq 0}s(n)f(3n+1)$$
Therefore,$$ \sum\limits_{n\geq 0}s(n)(f(n)-f(3n)+f(3n+1)-f(3n+2))=0. $$
Taking $f(n)=\frac{1}{n}$ for $n>0$ and $f(0)=0$, we get 
$$\sum\limits_{n\geq 1}(-1)^n \left( \frac{2}{3n}+\frac{1}{3n+1}-\frac{1}{3n+2}\right) =-\frac{1}{2}.$$
Taking $f(n)=L(n)$, exponentiating and taking the square root, we get
$$\prod \limits_{n\geq 1} \left( \frac{3n+1}{3n+2}\right) ^{(-1)^n}=\frac{2}{\sqrt{3}}.$$
Another way of obtaining the identity above can be found in \cite[Section 4.4]{beta}.
\end{example}

\begin{example}
We have proved in the previous section that if $B=2$, and $f(n)=L(n)$, we obtain the same identities as in \cite{infprod}. But unlike \cite{infprod}, our method applies
to bases other than $2$ as well. Taking $B=3$ and $t(n)=(-1)^{a_{21,3}(n)}$ we have
$$\prod \limits_{n\geq 1} \left( \frac{9n+7}{9n+8} \right)^{t(n)}=\frac{8}{7\sqrt{3}}. $$
This is obtained by applying Lemma \ref{lm:main} and exponentiating:
\begin{align*}&\sum\limits_{n\geq 0} t(n)(L(n)-L(3n)-L(3n+1)-L(3n+2))=  2\sum\limits_{n\geq 0} t(9n+7)L(9n+7)=-2\sum\limits_{n\geq 0} t(n)L(9n+7).\\
\Rightarrow &\sum\limits_{n\geq 0} t(n)(L(n)-L(3n)-L(3n+1)-L(3n+2)+2L(9n+7))= 0.\\
 \Rightarrow&\sum\limits_{n\geq 1} t(n)\log_2(\frac{9n+7}{9n+8})=\frac{1}{2}\sum\limits_{n\geq 1} t(n)(L(n)-L(3n)-L(3n+1)-L(3n+2)+2L(9n+7))\\
 &=\frac{1}{2}(L(1)+L(2)-2L(7))=3-\log_2(7)-\frac{1}{2}\log_2(3).
\end{align*}
\end{example}

\section{Convergence}\label{sec:conv}

It is proved in \cite{infprod} that for base $B=2$, $S(n)=\sum\limits_{k=0}^{n} a(k)=O(n^{\alpha})$ for some $\alpha<1$. 
In this section we give a proof of a similar result for all bases.

\begin{lemma}
Let $w=w_1...w_k$ be a non-empty word over $\{0,\ldots,B-1\}$ of length $k$. Let $u$ be a word over $\{0,\ldots,B-1\}$ of length $l$, then there exist 
words $v, v'$
over $\{0,\ldots,B-1\}$ of length $k$ such that
$$ \forall n,\;a(B^{k+l}n+B^l v_B(v) + v_B(u))=-a(B^{k+l}n+B^l v_B(v') + v_B(u)), $$
where $a(n)$ denotes $(-1)^{a_{w,B}(n)}$.
\end{lemma}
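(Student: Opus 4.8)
The plan is to exploit the freedom we have in choosing the "last $k$ digits" $v$, $v'$ so that the word $w$ either does or does not appear straddling the boundary between the $u$-block and the $v$-block, while leaving everything else in the base-$B$ expansion untouched. Concretely, writing $n$ in base $B$, the integer $B^{k+l}n + B^l v_B(v) + v_B(u)$ has expansion (digits of $n$) followed by (the length-$k$ word $v$) followed by (the length-$l$ word $u$), with $u$ and $v$ padded with leading zeros to their full lengths. Since the digits of $n$ and the digits of $u$ are fixed, changing $v$ to $v'$ only changes occurrences of $w$ that touch the $v$-block; so $a_{w,B}$ of the two numbers differs by the (signed) change in the count of occurrences of $w$ inside or overlapping the $v$-block. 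I want to arrange that this difference is odd, so that $a(\cdot)$ flips sign.

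First I would treat the case $w\neq 0^j$. Here the convention is that $n$'s expansion is preceded by arbitrarily many zeros, so the digits to the left of the $v$-block are not a concern. I would take $v' = w$ (so that $w$ occurs exactly once inside the $v$-block at the aligned position, plus possibly some extra overlapping occurrences determined by $u$ and by the self-overlaps of $w$), and take $v$ to be a word of length $k$ that agrees with $w$ in all but its first letter, say $v = \hat{w_1}\,w_2\cdots w_k$ using the $\hat{\cdot}$ notation from the Notation section. Then the occurrence of $w$ at the aligned position is destroyed in passing from $v'$ to $v$, while every occurrence of $w$ that starts strictly inside the $v$-block or straddles the $v$–$u$ boundary reads the same letters in both cases (those are positions $w_2\cdots w_k$ followed by a prefix of $u$, all unchanged), and occurrences straddling the $n$–$v$ boundary that would read $w_1$ as their last letter are unaffected by the change of the first letter of $v$ only if... — this is the delicate point. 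So more carefully: I would instead choose $v$ and $v'$ to differ only in their \emph{last} letter when $w_1\cdots w_{k}$ has no nontrivial self-overlap issues, or, to be safe in general, pick the differing position to be one that no occurrence of $w$ other than the single aligned one can use. The cleanest uniform choice: let $v'=w$ and let $v$ be obtained from $w$ by changing whichever single letter is needed so that $w$ no longer appears at the aligned slot and so that the parity of the total count of $w$-occurrences overlapping the $v$-block changes by exactly one.

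For the case $w = 0^j$ I would argue similarly but using the convention that $n$'s expansion starts with a nonzero digit; one takes $v' = 0^j$ and $v = 0^{j-1}c$ for a suitable nonzero $c$, or shifts $v$ by a small amount, so that the block of $j$ consecutive zeros present in $v'$ at the aligned slot is broken in $v$. The main obstacle — and the place the argument really has to do work — is controlling the \emph{overlapping} occurrences of $w$: changing one letter of the length-$k$ block can simultaneously create and destroy several occurrences of $w$ (both those internal to the block and those straddling the two boundaries with the fixed words coming from $n$ and from $u$). I would handle this by a parity bookkeeping argument: among all occurrences of $w$ that meet the $v$-block, partition them according to whether they contain the modified digit position; those that don't are unchanged, and those that do come in a set whose size changes by a controlled amount. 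The key observation to push through is that one can always locate a digit position within the length-$k$ window such that exactly one occurrence of $w$ (the fully-aligned one) uses that position, because an occurrence of $w$ of length $k$ that meets the window in that single position must coincide with the window — this pins the parity change to exactly $1$. Once that is established, $a_{w,B}$ of the two numbers differs by an odd integer, hence $a$ flips sign, which is exactly the claimed identity for all $n$.
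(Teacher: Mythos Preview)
Your overall strategy is right and is the same as the paper's: make $v$ and $v'$ differ in a single position of the length-$k$ window so that exactly one occurrence of $w$ is toggled, whence $a_{w,B}$ changes parity independently of $n$. You also correctly observe that the differing position must be the \emph{last} position of the window, since occurrences straddling the $n$--$v$ boundary (whose presence depends on $n$) can reach positions $1,\dots,k-1$ but never position $k$.

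The genuine gap is the interaction with $u$. Occurrences starting at some position $p\ge 2$ of the window and extending into $u$ also use position $k$; such an occurrence exists precisely when the prefix $u_1\cdots u_{p-1}$ equals the suffix $w_{k-p+2}\cdots w_k$. Your ``key observation'' that one can always find a digit used only by the aligned copy is asserted, not proved, and fails for the choice $v'=w$. For instance, with $B=2$, $w=11$, $u=1$, taking $v'=11$ and $v=10$ destroys two occurrences (at $p=1$ and $p=2$), so $a$ does not flip. Your hedging (``when $w$ has no nontrivial self-overlap issues'') concedes exactly this, and the final parity bookkeeping paragraph never closes the gap.

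The missing idea, which the paper supplies, is to let $d$ be the length of the longest prefix of $u$ that is a proper suffix of $w$ and then place a \emph{shifted} copy of $w$ whose last $d$ letters fall inside $u$: take
\[
v=\underbrace{\hat{w_1}\cdots\hat{w_1}}_{d}\,w_1\cdots w_{k-d},\qquad v'=v\text{ with its last letter replaced by }\hat{v_k}.
\]
Now among occurrences using position $k$ of the window: the padding $\hat{w_1}$ rules out starts at $p\le d$; maximality of $d$ rules out starts at $p>d+1$ (such a start would force a prefix of $u$ of length $p-1>d$ to be a proper suffix of $w$); and the unique start at $p=d+1$ reads $w_1\cdots w_{k-d}\,u_1\cdots u_d=w$ in $v$ but not in $v'$. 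So exactly one occurrence is toggled. Incidentally, no separate treatment of $w=0^j$ is needed: the leading-zero convention affects only $n$--$v$ straddling occurrences, and those never touch position $k$.
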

\begin{proof}
If no prefix of $u$ is a proper suffix of $w$, then we can take $v=w$, and 
$$v'_i=w_i \;\textnormal{for}\; i \neq k, \textnormal{and } v'_k=\hat {w_k} $$
Otherwise let $d$ be the length of the longest prefix of $u$ that is a proper suffix of $w$. We define $v$ and $v'$ as follows:
$$ v_i=\hat {w_1}\;\textnormal{for}\;i=1,...,d;\;v_i=w_{i-d}\;\textnormal{for}\; d<i\leq k, $$
$$v'_i=v^1_i \;\textnormal{for}\; i \neq k, \textnormal{and}\; v'_k=\hat {v_k}. $$
\end{proof}

\begin{lemma}\label{b}
Let $w$ be a non-empty word over $\{0,\ldots,B-1\}$ of length $k$ and $a(n)=(-1)^{a_{w,B}(n)}$.
 Let $b_0(n)=a(n)$, $b_{i}(n)=\sum\limits_{j=0}^{B^k-1}b_{i-1}(B^k n+j)=\sum\limits_{j=0}^{B^{ki}-1}a(B^{ki}n+j)$, 
 then $$|b_i(n)|\leq (B^k-2)^i\;\textnormal{for}\; i\geq 1 .$$
\end{lemma}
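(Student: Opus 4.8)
The plan is to prove the bound $|b_i(n)| \leq (B^k - 2)^i$ by induction on $i$, using the previous lemma to extract a sign cancellation at each step. First I would set up the base case $i = 1$: we must show $|b_1(n)| = \bigl|\sum_{j=0}^{B^k-1} a(B^k n + j)\bigr| \leq B^k - 2$. The sum ranges over $B^k$ consecutive values whose base-$B$ expansions agree in all but the last $k$ digits, which run over all words of length $k$. Applying the previous lemma with $u$ taken to be (the expansion of) $n$ — more precisely, using it to identify two words $v, v'$ of length $k$ for which $a(B^k n + v_B(v)) = -a(B^k n + v_B(v'))$ — we find two terms in the sum that cancel. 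Since the remaining $B^k - 2$ terms each have absolute value $1$, the triangle inequality gives $|b_1(n)| \leq B^k - 2$.

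Next I would handle the inductive step. Suppose $|b_{i-1}(m)| \leq (B^k-2)^{i-1}$ for all $m$. From the recurrence $b_i(n) = \sum_{j=0}^{B^k-1} b_{i-1}(B^k n + j)$, I want to mimic the base case: find two indices $j_0, j_1$ among $\{0,\dots,B^k-1\}$ such that $b_{i-1}(B^k n + j_0) = -b_{i-1}(B^k n + j_1)$, so those two terms cancel and the remaining $B^k - 2$ terms are each bounded by $(B^k-2)^{i-1}$, yielding $|b_i(n)| \leq (B^k - 2)^{i-1} \cdot (B^k - 2) = (B^k-2)^i$. The key observation is that the previous lemma produces a cancellation of the form $a(B^{k+l} m + B^l v_B(v) + v_B(u)) = -a(B^{k+l} m + B^l v_B(v') + v_B(u))$ where $v, v'$ differ only in their last digit; iterating (or expanding the definition $b_{i-1}(m) = \sum_{j'=0}^{B^{k(i-1)}-1} a(B^{k(i-1)} m + j')$) one pairs up the summands of $b_{i-1}(B^k n + v_B(v))$ with those of $b_{i-1}(B^k n + v_B(v'))$ term by term via that lemma, with $u$ playing the role of the fixed low-order block, so the two blocks cancel identically.

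I expect the main obstacle to be bookkeeping: making precise how the word-combinatorial lemma, which cancels a single pair of values of $a$, lifts to a statement that two entire inner blocks $b_{i-1}(B^k n + v_B(v))$ and $b_{i-1}(B^k n + v_B(v'))$ are negatives of each other. The cleanest route is probably to unfold everything back to the level of $a$: write $b_i(n) = \sum_{j=0}^{B^{ki}-1} a(B^{ki} n + j)$, group the index range $\{0,\dots,B^{ki}-1\}$ by the top $k$ digits, and show that the group with top digits $v$ and the group with top digits $v'$ (from the lemma, applied with $l = k(i-1)$ and $u$ ranging over all length-$k(i-1)$ words) cancel in pairs indexed by the low-order block $u$. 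This reduces the whole lemma to a single application of the previous lemma plus a counting argument that exactly two of the $B^k$ top-digit choices get used up in the cancellation, leaving a factor $B^k - 2$ and an exact induction.
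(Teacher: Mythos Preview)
Your base case is fine, but the inductive step has a real gap. You want a single pair $j_0,j_1\in\{0,\dots,B^k-1\}$ with $b_{i-1}(B^kn+j_0)=-b_{i-1}(B^kn+j_1)$ for all $n$, obtained by pairing the inner summands via the previous lemma. But that lemma produces a pair $(v,v')$ that \emph{depends on the low-order word $u$}: in its proof, $v$ and $v'$ are built from the length $d$ of the longest prefix of $u$ that is a proper suffix of $w$. For instance, with $B=2$, $w=10$, and $l=2$, one gets $(v,v')=(01,00)$ when $u\in\{00,01\}$ but $(v,v')=(10,11)$ when $u\in\{10,11\}$. So there is no single choice of top block $j_0,j_1$ that cancels $b_{i-1}(B^kn+j_0)$ against $b_{i-1}(B^kn+j_1)$ term by term, and your ``exactly two of the $B^k$ top-digit choices get used up'' is not justified. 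If instead you cancel a \emph{different} pair of top blocks for each low-order $u$, you eliminate $2\cdot B^{k(i-1)}$ terms from the full sum of $B^{ki}$ terms, giving only $|b_i(n)|\le (B^k-2)B^{k(i-1)}$, which is much weaker than $(B^k-2)^i$.

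The fix is to strengthen the induction hypothesis: prove that for every $i\ge 1$ there is a subset $S_i\subset[0,B^{ki}-1]$ of cardinality $(B^k-2)^i$, independent of $n$, with $b_i(n)=\sum_{j\in S_i}a(B^{ki}n+j)$. The base case is exactly your argument. For the step, write $b_{i+1}(n)=\sum_{j\in S_i}\sum_{m=0}^{B^k-1}a(B^{k(i+1)}n+B^{ki}m+j)$ and, for each fixed $j\in S_i$, apply the previous lemma with $u$ equal to the length-$ki$ expansion of $j$ to remove two values of $m$ from the inner sum. The cancellation pair now legitimately depends on $j$, and you are left with $(B^k-2)\,|S_i|=(B^k-2)^{i+1}$ surviving terms, each equal to $\pm 1$. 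The desired bound follows immediately.
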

\begin{proof}
Let us prove a stronger assertion: for all $i\geq 1$, there exists a subset $S_i$ of $[0,B^{ki}-1]$ of cardinality $(B^k-2)^i$ such that
for all $n$, $b_i(n)=\sum\limits_{j\in S_i} a(B^{ki} n+j)$.

  For $i=1$, $b_1(n)=\sum\limits_{j=0}^{B^k-1}a(B^k n+j)$. By the previous lemma, there exist $j_1,\;j_2$ with $0\leq j_1,j_2<B^k$ such that for all $n$,
 $ a(B^k n+j_1)=-a(B^k n +j_2)$. Thus we can define $S_1$ as $[0,B^k-1]\backslash \{j_1, j_2\}$ and
 $$ b_1(n)=\sum\limits_{j\in S_1} a(B^k n +j).$$
 Suppose that the assertion has been proved for $i$, let us prove that it is also true for $i+1$.
 By the induction hypothesis,  $$b_i(n)=\sum\limits_{j\in S_i} a(B^{ki} n+j),\;\textnormal{where}\; S_i \;\textnormal{is a subset of}\; 
 [0,B^{ki}-1]\; \textnormal{of cardinality}\; (B^k-2)^i. $$
 Therefore
 \begin{align*}b_{i+1}(n)&=\sum\limits_{m=0}^{B^k-1}b_i(B^k n+m)
 \\&=\sum\limits_{m=0}^{B^k-1}\sum\limits_{j\in S_i} a(B^{ki}(B^k n+m)+j)\\
 &=\sum\limits_{j\in S_i}\sum\limits_{m=0}^{B^k-1}a(B^{ki+k}n+B^{ki} m + j). \end{align*}
Again, by the previous lemma, for each $j$ in the first sum there exist $m_{j,1},m_{j,2}$ such that for all $n$,
$a(B^{ki+k}n+B^{ki} m_{j,1} + j)=-a(B^{ki+k}n+B^{ki} m_{j,2} + j) $. Thus in the inner sum there are at most $B^k-2$ terms, which proves
the existence of a subset $S_{i+1}$ of $[0,B^{k(i+1)}-1]$ of cardinality $(B^k-2)^{(i+1)}$ such that 
$b_{i+1}(n)=\sum\limits_{j\in S_{i+1}}a(B^{k(i+1)}n+j)$.
\end{proof}

Before proving the Proposition stated at the beginning of Section \ref{sec:main}, we illustrate with an example the first step of the proof, which consists of 
decomposing $S(n)$ into blocks of $b_i$. 
Take $B=3$, $|w|=2$ and $n=200$. First we write $n$ in base $B^{|w|}=9$: $n=2 \cdot 9^2 + 4 \cdot 9^1 + 2 \cdot 9^0$. 
$S(n)=b_2(0)+b_2(1)+b_1(18)+b_1(19)+b_1(20)+b_1(21)+b_0(198)+b_0(199)$. By Lemma \ref{b}, $|S(n)| \leq 2 \cdot 7^2 + 4 \cdot 7^1 + 2\cdot 7^0.$

\begin{proof}[Proof of Proposition 1]
We write $n$ in base $V$: $n=n_f V^f+n_{f-1} V^{f-1}+...+n_1 V+n_0$, where $0\leq n_i<V$ for $i=0,...,f$ and $n_f\neq 0$. We have 
$n\geq V^f$, and therefore $f\leq \log_V (n)$. 
On the other hand, by the previous lemma, 
$$|S(n)|\leq n_f (V-2)^f+ n_{f-1} (V-2)^{f-1}+...+n_1 (V-2)+n_0.$$
If $B=2$ and $k=1$, then $V=0$ and $|S(n)|=n_0$. If $B=3$ and $k=1$, then $|S(n)|\leq n_f+...+n_0 \leq 2\cdot \lceil \log_3 (n) \rceil $. Otherwise, we have
\begin{align*}
|S(n)|&\leq (V-1)((V-2)^f+(V-2)^{f-1}+...+(V-2)+1)\\
& < (V-1)\cdot 2(V-2)^f\\
&\leq 2(V-1)(V-2)^{\log_V(n)}\\
&=2(V-1)n^{\log_V(V-2)}\\
&=O(n^{\log_V(V-2)}).
\end{align*}

\end{proof}

We recall that $a(n)$ denotes $(-1)^{a_{w,B}(n)}$ and $\alpha=\log_{B^{|w|}}(B^{|w|}-2) $ for $(B,|w|)\neq (2,1)$ or $(3,1)$.
\begin{coro}\label{coro}
 Let $f:\mathbb{N}\rightarrow \mathbb{C}$ be a function such that 
 $f(n)=O(n^{\beta})$ and $f(n+1)-f(n)=O(n^{\beta-1})$ for $\beta<0$ if $B=2$ or $3$ and $k=1$, and  $\beta <-\alpha$ otherwise, then the series $\sum a(n)f(n)$ converges.
\end{coro}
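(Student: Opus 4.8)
The plan is to establish convergence of $\sum_{n\geq 0} a(n)f(n)$ by Abel summation (summation by parts), using the bound on the partial sums $S(n)$ from the Proposition together with the decay hypotheses on $f$. Write $\sum_{n=0}^{N} a(n)f(n) = S(N+1)f(N) - \sum_{n=0}^{N-1} S(n+1)\bigl(f(n+1)-f(n)\bigr)$, so that convergence of the left-hand side as $N\to\infty$ reduces to two things: that the boundary term $S(N+1)f(N)$ tends to a limit (in fact to $0$), and that the series $\sum_{n\geq 0} S(n+1)\bigl(f(n+1)-f(n)\bigr)$ converges absolutely.

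First I would dispose of the boundary term. In the case $B=2$, $k=1$ we have $|S(n)|\leq 1$ and $f(n)=O(n^{\beta})$ with $\beta<0$, so $S(N+1)f(N)\to 0$. In the case $B=3$, $k=1$ we have $|S(n)| \leq 2\lceil\log_3 n\rceil$, and again $f(N)=O(N^\beta)$ with $\beta<0$ kills the logarithmic factor, so the product tends to $0$. In the remaining cases $|S(n)|=O(n^{\alpha})$ with $\alpha<1$ and $f(N)=O(N^{\beta})$ with $\beta<-\alpha$, hence $S(N+1)f(N)=O(N^{\alpha+\beta})\to 0$. So in all cases the boundary term vanishes in the limit.

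Next I would handle the series $\sum_{n\geq 1} S(n+1)\bigl(f(n+1)-f(n)\bigr)$. Using $f(n+1)-f(n)=O(n^{\beta-1})$, the general term is bounded in absolute value by a constant times $|S(n+1)|\,n^{\beta-1}$. In the case $B=2$, $k=1$ this is $O(n^{\beta-1})$ with $\beta-1<-1$, hence summable. In the case $B=3$, $k=1$ it is $O(n^{\beta-1}\log n)$ with $\beta-1<-1$, still summable since $\beta<0$ gives $\beta-1<-1$ and the logarithm is absorbed by any strictly smaller exponent. In the remaining cases it is $O(n^{\alpha}\cdot n^{\beta-1}) = O(n^{\alpha+\beta-1})$, and since $\beta<-\alpha$ we get $\alpha+\beta-1<-1$, so the series converges absolutely. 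Combining the two pieces, $\sum_{n=0}^{N}a(n)f(n)$ converges as $N\to\infty$, which is the claim.

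The only delicate point is bookkeeping the three regimes of the Proposition simultaneously and checking that in each one the chosen threshold on $\beta$ is exactly what makes both the boundary term vanish and the Abel-summed series converge; the logarithmic case $B=3$, $k=1$ requires a moment's care to see that $\beta<0$ already forces $\beta-1<-1$ with room to spare for the $\log$ factor. No genuine obstacle is expected beyond this routine case analysis.
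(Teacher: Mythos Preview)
Your proposal is correct and follows essentially the same route as the paper: both apply Abel summation (summation by parts), then use the Proposition's bounds on $S(n)$ in the three regimes to show the boundary term vanishes and the summed series converges absolutely. Your write-up is somewhat more explicit in the case analysis, but the argument is the same.
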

\begin{proof}
The result is immediate when we use the formula of summation by parts. We define
$$S(n)=\sum\limits_{k=0}^{n}a(k)\;\;\;\textnormal{ and }  T(n)=\sum\limits_{k=0}^{n}a(k)f(k).$$
Then
 $$T(n)=f(n)S(n)+\sum\limits_{k=0}^{n-1}S(k)(f(k+1)-f(k)).$$
 
 If $B=2$ and $|w|=1$, $T(n)$ converges as $S(n)=O(1)$.
 
  If $B=3$ and $|w|=1$, $T(n)$ converges as $S(n)=O(\log(n))$.
  
Otherwise $f(n)S(n)=O(n^{\alpha+\beta})$ and $S(k)(f(k+1)-f(k))=O(k^{\alpha+\beta-1}) $, and 
$\lim_{n\rightarrow \infty} T(n)$ exists.
\end{proof}


\begin{thebibliography}{99}
\bibitem{algo}
J.-P. Allouche, P. Hajnal and J. O. Shallit, ``Analysis of an infinite product algorithm'', Siam J. Disc. Math. (1989), 
Vol. 2, No. 1 : pp. 1--15
\bibitem{infprod}J.-P. Allouche and  J. O. Shallit, ``Infinite Products Associated with Counting Blocks in Binary Strings'', 
J. London Math. Soc. (1989) s2-39 (2): 193--204. 

\bibitem{hurwitz} 
J.-P. Allouche and J. Shallit, ``Sums of digits and the Hurwitz zeta function'', Analytic Number Theory, Tokyo, 1988,
Lecture Notes in Math., vol. 1434, Springer, Berlin, 1990, pp. 19–30. 

\bibitem{regular}
J.-P. Allouche and J. Shallit, ``The ring of $k$-regular sequences'',  Theoret. Comput. Sci. 98 (1992), no. 2, 163–197.

\bibitem{sums}

J.-P. Allouche, J. Shallit and J. Sondow,
``Summation of series defined by counting blocks of digits'',
J. Number Theory 123, No. 1, 133--143 (2007).

\bibitem{beta}
 J.-P. Allouche and J. Sondow, ``Infinite products with strongly B-multiplicative exponents'', Ann. Univ. Sci. Budapest. Sect. Comput. 28 (2008), 35--53. 


\bibitem{robbins}D. Robbins, ``Solution to Problem E 2692'', Amer. Math. Monthly 86 (1979) 394--395.

%\bibitem{gamma}
%E. T. Whittaker and G. N. Watson, ``A course of modern analysis'', Cambridge University Press, Cambridge, 1978.

\bibitem{woods} D. R. Woods, ``Elementary problem proposal E 2692'', Amer. Math. Monthly 85 (1978) 48.


\end{thebibliography}
\end{document}